\begin{document}

\author[R.~Baumann \and T.~P.~Wihler]{Ramona Baumann \and Thomas P.~Wihler} 
\address{Mathematics Institute, University of Bern, Switzerland}
\email{wihler@math.unibe.ch}

\title[A Nitsche FEM for Problems with Discontinuous Boundary Conditions]{A Nitsche Finite Element Approach for Elliptic Problems with Discontinuous Dirichlet Boundary Conditions}

\keywords{Second-order elliptic PDE, discontinuous Dirichlet boundary conditions, Nitsche FEM}
\subjclass[2010]{65N30}

\newcommand{\dd}{\,{\rm d}}
\newcommand{\ds}{\dd s}
\newcommand{\jl}{[\![}
\newcommand{\jr}{]\!]}
\newcommand{\jmp}[1]{\jl#1\jr}
\newcommand{\T}{{{\mathcal T}_h}}
\newcommand{\dx}{\dd\bm x}
\newcommand{\NN}[1]{\left\|#1\right\|}
\newcommand{\V}{\mathbb{V}(\T)}
\newcommand{\supp}{\mathrm{supp}}
\newcommand{\wh}[1]{\widehat{#1}}
\newcommand{\pw}{\mathsf{pw}}
\newcommand{\pts}[1]{\bm #1}
\newcommand{\set}[1]{\mathcal{#1}}
\newcommand{\A}{\set A}

\newtheorem{theorem}{Theorem}[section]
\newtheorem{lemma}[theorem]{Lemma}
\newtheorem{proposition}[theorem]{Proposition}
\newtheorem{corollary}[theorem]{Corollary}
\newtheorem{definition}[theorem]{Definition}
\newtheorem{example}[theorem]{Example}
\newtheorem{remrk}[theorem]{Remark}
\newenvironment{remark}[1]{\begin{remrk}\rm #1}{\end{remrk}}

\begin{abstract}
We present a numerical approximation method for linear diffusion-reaction problems with possibly discontinuous Dirichlet boundary conditions. The solution of such problems can be represented as a linear combination of explicitly known singular functions as well as of an $H^2$-regular part. The latter part is expressed in terms of an elliptic problem with regularized Dirichlet boundary conditions, and can be approximated by means of a Nitsche finite element approach. The discrete solution of the original problem is then defined by adding the singular part of the exact solution to the Nitsche approximation. In this way, the discrete solution can be shown to converge of second order with respect to the mesh size.
\end{abstract}


\maketitle

\section{Introduction}
Given a bounded, open and convex polygonal domain $\Omega\subset\mathbb{R}^2$ with straight
edges, we consider the linear diffusion-reaction problem
\begin{alignat}{2}
-\Delta u+\mu u&=f&\qquad&\text{in }\Omega,\label{eq:1a}\\
u&=g&&\text{on }\Gamma,\label{eq:1b}
\end{alignat}
where $\Gamma=\partial\Omega$ denotes the boundary of $\Omega$, $\,\mu\in
L^\infty(\Omega)$ is a nonnegative function, $f\in L^2(\Omega)$ is a source term, and $g\in L^2(\partial\Omega)$ is a possibly discontinuous function on~$\Gamma$ whose precise regularity will be specified later on.

Various formulations for~\eqref{eq:1a}--\eqref{eq:1b}, where the Dirichlet boundary data does not necessarily belong to~$H^{\nicefrac12}(\Gamma)$, exist in the literature. For instance, the \emph{very weak formulation} is based on twofold integration by parts of~\eqref{eq:1a} and, thereby, incorporates the Dirichlet boundary conditions in a natural way. It seeks a solution~$u\in L^2(\Omega)$ such that
\[
-\int_\Omega u\Delta v\dx +\int_{\Omega} \mu uv \dx =\int_\Omega fv\dx
-\int_\Gamma g\, \nabla v \cdot {\bf n}\ds
\]
for any~$v\in H^2(\Omega)\cap H_0^1(\Omega)$, where we write ${\bf n}$ for
the unit outward normal vector to the boundary $\Gamma$. Alternatively, the following saddle point formulation, which traces back to the work~\cite{Ne62}, may be applied: provided that~$g\in
H^{\nicefrac{1}{2}-\varepsilon}(\partial\Omega)$, for some
$\varepsilon\in[0,\nicefrac12)$, find~$u\in H^{1-\varepsilon}(\Omega)$
  with $u|_\Gamma=g$ such that
\begin{equation}\label{eq:saddle}
\int_\Omega\nabla u\cdot\nabla v\dx +\int_{\Omega} \mu uv \dx=\int_\Omega fv\dx
\end{equation}
for all~$v\in H^{1+\varepsilon}(\Omega)\cap H_0^1(\Omega)$; for results dealing with finite element approximations of~\eqref{eq:saddle}, we refer to~\cite{Ba71}. Another related approach is based on weighted Sobolev spaces (accounting for the local singularities of solutions with discontinuous boundary data), and has been analyzed in the context of $hp$-type discontinuous Galerkin methods in~\cite{HoustonWihler:12}.

The main idea of this paper is to represent the (weak) solution of~\eqref{eq:1a}--\eqref{eq:1b} in terms of a regular $H^2$ part as well as an explicitly known singular part (Section~\ref{sc:weak}). The latter is expressed by means of suitable singular functions which account for the local discontinuities in the Dirichlet boundary data (Section~\ref{sc:sing}). Here, it is crucial to ensure that the boundary data of the regular problem is sufficiently smooth as to provide an $H^2$ trace lifting (see Section~\ref{sc:trace}). We shall employ a classical Nitsche technique in order to discretize the regular part of the solution, and define the numerical approximation of~\eqref{eq:1a}--\eqref{eq:1b} by adding back the (exact) singular part (Section~\ref{sc:Nitsche}). A numerical experiment (Section~\ref{sc:num}) underlines that our approach provides optimally converging results.

Throughout the paper we shall use the following notation: For an open
domain~$\set D\subset\mathbb{R}^n$, $n\in\{1,2\}$, and~$p\in[1,\infty]$, we denote by~$L^p(\set D)$ the class of Lebesgue spaces on~$\set D$. For~$p=2$, we write~$\NN{\,\cdot\,}_{0,\set D}$ to signify the~$L^2$-norm on~$\set D$. Furthermore, for an integer $k\in\mathbb{N}_0$, we let $H^k(\set D)$ be the usual Sobolev space of order~$k$ on $\set D$, with norm~$\|\cdot\|_{k,D}$ and semi-norm~$|\cdot|_{k,\set D}$. The set~$H^1_0(\set D)$ represents the subspace of~$H^1(\set D)$ of all functions with zero trace along~$\partial\set D$. If~$\set D$ is represented as a (disjoint) finite union of open sets, that is, $\overline{\set D}=\bigcup_i\overline{\set D}_i$, and~$X$ is any class of function spaces, then we write~$X_{\pw}(\set D)=\Pi_{i}X(D_i)$ to mean the set of all functions which belong \emph{piecewise} (with respect to the partition~$\{\set D_i\}_i$) to~$X$.

\section{Problem formulation}

The aim of this section is to establish a suitable framework for the weak solution of~\eqref{eq:1a}--\eqref{eq:1b}.

\subsection{Notation}
Let $\set A=\{\pts A_i\}_{i=1}^M\subset\partial\Omega$, with $\pts A_i\neq\pts  A_j$, for~$1\le i\neq j\le M$, be a finite set of points on the boundary of the polygonal domain $\Omega$, which are numbered in counter-clockwise
direction along~$\partial\Omega$; the points in~$\A$ mark the
locations where the Dirichlet boundary condition~$g$ from~\eqref{eq:1b} exhibits discontinuities. Furthermore, we denote by~$\Gamma_i\subset\Gamma$, $i=1,2,\ldots,M$, the open edge which connects the two points~$\pts A_i$ and~$\pts A_{i+1}$; in the sequel, we shall identify indices~$0\simeq M$, $1\simeq M+1$, etc.; for instance, we have $\pts A_{M+1}=\pts A_1$ and~$\pts A_0=\pts A_M$, or~$\Gamma_{M+1}=\Gamma_1$ and~$\Gamma_0=\Gamma_M$, etc. Moreover, let~$\omega_i\in(0,\pi]$ signify the interior angle of~$\Omega$ at~$\pts A_i$ (in counter-clockwise direction). Finally, for~$\phi\in C^0_{\pw}(\Gamma)$, i.e., $\phi|_{\Gamma_i}\in C^0(\Gamma_i)$, for~$1\le i\le M$, we set~$\phi_i:=\phi|_{\Gamma_i}$, and define the one-sided limits
\[
\phi(\pts A_i^+)=\lim_{\genfrac{}{}{0pt}{}{\bm x\to\bm A_i}{\bm x\in\Gamma_{i}}}\phi_i(\bm x),\qquad
\phi(\pts A_i^-)=\lim_{\genfrac{}{}{0pt}{}{\bm x\to\bm A_i}{\bm x\in\Gamma_{i-1}}}\phi_{i-1}(\bm x),
\]
and the jumps~$\jmp{\phi}_i=\phi(\pts A_i^+)-\phi(\pts A_i^-)$, for $i=1,\ldots, M$.

\subsection{Singular functions}\label{sc:sing}
In the following, based on the partition~$\overline{\Gamma}=\bigcup_{i=1}^M\overline{\Gamma}_i$, we assume that the boundary data $g$ from~\eqref{eq:1b} satisfies
\begin{equation}\label{eq:gpw}
g\in H_{\pw}^2(\Gamma),
\end{equation} 
i.e.,  with the notation above, we have $g_i\in H^2(\Gamma_i)$, for~$1\le i\le M$. We note the continuous Sobolev embedding~$H^1(\Gamma_i)\hookrightarrow L^\infty(\Gamma_i)$, i.e.,
\begin{equation}\label{eq:Sobolev}
\sup_{\pts x\in\Gamma_i}|v(\pts x)|\le C\|v\|_{1,\Gamma_i},\qquad\forall v\in H^1(\Gamma_i),\quad i=1,\ldots, M,
\end{equation}
for a constant~$C=C(\Gamma_i)>0$. In particular, this implies that the values of $g(\pts A_i^\pm)$ and~$g'(\pts A_i^\pm)$, with~$g'$ denoting the (edgewise) tangential derivative of~$g$ in counter-clockwise direction along~$\Gamma$, are well-defined. Hence, for~$r_i\neq 0$, we may consider the singular functions (cf.~\cite[Lemma~6.1.1]{MelenkDiss}), for $1\le i\le M$,
\begin{equation}\label{eq:Theta}
\Theta_i(r_i,\theta_i)=
\begin{cases}\displaystyle
g(\pts A_i^+)-\frac{\theta_i}{\omega_i}\jmp{g}_i &\text{if }\omega_i\in(0,\pi),\\[2ex] \displaystyle
g(\pts A_i^+)-\frac{1}{\pi}\left(\theta_i\jmp{g}_i+\sigma_i(r_i,\theta_i)\jmp{g'}_i\right) &\text{if }\omega_i=\pi,
\end{cases}
\end{equation}
with
\[
\sigma_i(r_i,\theta_i)=r_i\left(\ln(r_i)\sin(\theta_i)+\theta_i\cos(\theta_i)\right).
\]
Here, $(r_i,\theta_i)$ denote polar coordinates with 
respect to a local coordinate system centered at $\pts A_i$ such that $\theta_i=0$ on $\Gamma_i$, and $\theta_i=\omega_i$ on $\Gamma_{i-1}$. 
We note that $\Theta_i$ is harmonic away from~$\pts A_i$, i.e., $\Delta\Theta_i =0$ in $\Omega$. Since~$\Theta_i$ is smooth away from~$\pts A_i$, there holds
\begin{equation}\label{eq:gjmp}
\jmp{\Theta_i}_j=\delta_{ij}\jmp{g}_i,\qquad 1\le i,j\le M,
\end{equation}
where~$\delta_{ij}$ is Kronecker's delta. In addition, for~$\omega_j=\pi$, we have
\begin{equation}\label{eq:g'jmp}
\jmp{\Theta_i'}_j=\delta_{ij}\jmp{g'}_j,
\end{equation} 
for~$i=1,\ldots,M$.

\subsection{Trace lifting}\label{sc:trace}

Defining the function
\begin{equation}\label{eq:ghat}
\widehat g:\,\Gamma\to\mathbb{R},\qquad \widehat g:=g-\sum_{i=1}^M\Theta_i|_\Gamma,
\end{equation}
with~$\Theta_i$ from~\eqref{eq:Theta}, and recalling~\eqref{eq:gjmp}, we note that
\begin{equation}\label{eq:id1}
\jmp{\widehat g}_j=\jmp{g}_j-\sum_{i=1}^M\jmp{\Theta_i}_j=0,\qquad 1\le j\le M,
\end{equation}
i.e., $\widehat g$ is continuous along the boundary~$\Gamma$. Similarly, whenever~$\omega_j=\pi$, using~\eqref{eq:g'jmp}, we have
\begin{equation}\label{eq:id2}
\jmp{\widehat g'}_j=\jmp{g'}_j-\sum_{i=1}^M\jmp{\Theta_i'}_j=0.
\end{equation} 

\begin{lemma}\label{lm:g}
There holds the estimate
\[
\sum_{i=1}^M\|\widehat g_i\|_{2,\Gamma_i}\le C\sum_{i=1}^M\|g_i\|_{2,\Gamma_i},
\]
where~$C>0$ is a constant independent of~$g$.
\end{lemma}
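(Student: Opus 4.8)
The plan is to reduce everything to the triangle inequality. Writing $\widehat g_i = g_i - \sum_{k=1}^M\Theta_k|_{\Gamma_i}$, we have $\NN{\widehat g_i}_{2,\Gamma_i}\le\NN{g_i}_{2,\Gamma_i}+\sum_{k=1}^M\NN{\Theta_k|_{\Gamma_i}}_{2,\Gamma_i}$, so it suffices to bound each restriction $\NN{\Theta_k|_{\Gamma_i}}_{2,\Gamma_i}$ by the coefficients entering~\eqref{eq:Theta}. These coefficients, namely $g(\pts A_k^\pm)$, $\jmp{g}_k$, and, when $\omega_k=\pi$, also $g'(\pts A_k^\pm)$ and $\jmp{g'}_k$, are point evaluations of $g$ and of its tangential derivative $g'$ at the corner $\pts A_k$. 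Since $g_j\in H^2(\Gamma_j)$ gives $g_j,g_j'\in H^1(\Gamma_j)$, applying the Sobolev embedding~\eqref{eq:Sobolev} on the two edges $\Gamma_{k-1},\Gamma_k$ meeting at $\pts A_k$ controls all of these quantities by $C(\NN{g_{k-1}}_{2,\Gamma_{k-1}}+\NN{g_k}_{2,\Gamma_k})$; in particular the same bound holds for the jumps $\jmp{g}_k$ and $\jmp{g'}_k$.

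Next, I would estimate $\NN{\Theta_k|_{\Gamma_i}}_{2,\Gamma_i}$ with a constant depending only on the (fixed) geometry of $\Omega$, distinguishing two cases. If $\Gamma_i$ is \emph{not} adjacent to $\pts A_k$, i.e.\ $k\notin\{i,i+1\}$, then by convexity of the polygon the closed edge $\overline{\Gamma}_i$ keeps a positive distance $r_{\min}>0$ from $\pts A_k$, so that $r_k\ge r_{\min}$ along $\Gamma_i$. There $\Theta_k$ is a composition of functions that are smooth in $(r_k,\theta_k)$ for $r_k>0$ (the term $r_k\ln r_k$ included) with the smooth arc-length parametrization of $\Gamma_i$; hence its tangential derivatives up to second order are bounded on the compact set $\overline{\Gamma}_i$ by geometric constants times the coefficients, and $\NN{\Theta_k|_{\Gamma_i}}_{2,\Gamma_i}\le C(\NN{g_{k-1}}_{2,\Gamma_{k-1}}+\NN{g_k}_{2,\Gamma_k})$ follows.

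The delicate case, and the main obstacle, is when $\Gamma_i$ is adjacent to $\pts A_k$, i.e.\ $k=i$ or $k=i+1$, because then $\Gamma_i$ runs into the singular point and one must verify that the $r_k\ln r_k$ contribution in $\sigma_k$ (present for $\omega_k=\pi$) does not spoil the $H^2$-regularity. The key observation I would exploit is that the two edges meeting at $\pts A_k$ are exactly $\Gamma_k=\{\theta_k=0\}$ and $\Gamma_{k-1}=\{\theta_k=\omega_k\}$, so that $\sin\theta_k=0$ on both; consequently the $\ln r_k$ factor in $\sigma_k$ is annihilated along any adjacent edge. The restriction therefore collapses to a smooth function of the arc length: on $\Gamma_k$ it is the constant $g(\pts A_k^+)$, while on $\Gamma_{k-1}$ (where $\omega_k=\pi$) a short computation with $\theta_k=\pi$ gives the affine function $g(\pts A_k^-)+r_k\jmp{g'}_k$; for $\omega_k\in(0,\pi)$ the restriction is constant on both adjacent edges. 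In either situation $\NN{\Theta_k|_{\Gamma_i}}_{2,\Gamma_i}$ is finite and bounded by the coefficients times the edge length. Summing these bounds over the finitely many indices $k=1,\dots,M$ and then over $i$, and collecting all geometric and embedding constants into a single $C>0$ independent of $g$, yields the asserted estimate.
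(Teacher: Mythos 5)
Your proof is correct and follows essentially the same route as the paper's: triangle inequality, edgewise bounds on $\|\Theta_k\|_{2,\Gamma_i}$ in terms of the corner values and jumps of $g$ and $g'$, and the Sobolev embedding~\eqref{eq:Sobolev} to control those quantities by $\sum_i\|g_i\|_{2,\Gamma_i}$. Your adjacent-edge analysis (the observation that $\sin\theta_k=0$ on $\Gamma_{k-1}$ and $\Gamma_k$, so the restriction of $\Theta_k$ there is constant or affine) is precisely the justification behind the paper's one-line assertion that $\Theta_j$ is linear along its two adjacent edges and smooth on the remaining ones.
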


\begin{proof}
By definition of~$\widehat g$, see~\eqref{eq:ghat}, for any~$1\le i\le M$, there holds
\begin{align*}
\|\widehat g_i\|_{2,\Gamma_i}
&\le \|g_i\|_{2,\Gamma_i}+\sum_{j=1}^M\|\Theta_j\|_{2,\Gamma_i}.
\end{align*}
Since~$\Theta_j$ is a linear function along both~$\Gamma_{j-1}$ and~$\Gamma_{j}$ and smooth on~$\bigcup_{k\neq j-1,j}\overline{\Gamma}_k$, we deduce the bound
\[
\|\Theta_j\|_{2,\Gamma_i}
\le C_{ij}\left(|g(\pts A_j^+)|+\omega_j^{-1}|\jmp{g}_j|+|\jmp{g'}_j|\right),
\]
where~$C_{ij}>0$ is a constant depending on~$\pts A_j$ and~$\Gamma_i$. Hence,
\begin{align*}
\sum_{i=1}^M\|\widehat g_i\|_{2,\Gamma_i}
&\le \sum_{i=1}^M\|g_i\|_{2,\Gamma_i}+\sum_{i,j=1}^MC_{ij}\left(|g(\pts A_j^+)|+\omega_j^{-1}|\jmp{g}_j|+|\jmp{g'}_j|\right)\\
&\le \sum_{i=1}^M\|g_i\|_{2,\Gamma_i}+C\sum_{i=1}^M\left(\|g_i\|_{\infty,\Gamma_i}+\|g'\|_{\infty,\Gamma_i}\right).
\end{align*}
Using~\eqref{eq:Sobolev}, the proof is complete.
\end{proof}

The identities~\eqref{eq:id1} and~\eqref{eq:id2} together with the previous lemma imply the following result.

\begin{lemma}\label{lifting}
There exists a lifting $\widehat{U}\in H^2(\Omega)$ of the boundary data 
$\widehat g$, i.e., $\widehat{U}|_{\Gamma}=\widehat{g}$ in the sense of traces, with
\begin{equation}\label{ub:lifting}
\|\widehat{U}\|_{2,\Omega}\leq C\sum_{i=1}^M\| g_i\|_{2,\Gamma_i},
\end{equation}
where~$C>0$ is a constant independent of~$g$.
\end{lemma}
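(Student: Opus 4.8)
The plan is to construct the lifting $\widehat U$ explicitly by patching together local $H^2$ extensions near each vertex $\pts A_i$, and then reduce the global $H^2$ bound to the piecewise bound on $\widehat g$ furnished by Lemma~\ref{lm:g}. Since $\Omega$ is a convex polygon with straight edges, near each vertex $\pts A_i$ the domain locally looks like a wedge of opening angle $\omega_i\in(0,\pi]$. The decisive structural input is that $\widehat g$ is not merely piecewise $H^2$, but is globally continuous along $\Gamma$ by~\eqref{eq:id1}, and its tangential derivative is continuous across any straight ($\omega_j=\pi$) vertex by~\eqref{eq:id2}. These matching conditions are exactly what is needed for a piecewise $H^2$ boundary datum to admit an $H^2$ lifting: continuity of the trace and of its tangential derivative at the corners are the natural compatibility conditions for the trace of an $H^2(\Omega)$ function on a polygon.

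Concretely, I would proceed as follows. First, recall the standard trace theory for $H^2$: the trace operator maps $H^2(\Omega)$ onto the space of pairs $(\widehat g, \partial_n \widehat g)$ with $\widehat g$ in the appropriate Sobolev-type space on $\Gamma$ and with the corner compatibility conditions satisfied; conversely there is a bounded right inverse (a continuous lifting). The main point is therefore to verify that $\widehat g$ lies in the correct trace space and satisfies the corner conditions. The tangential regularity $\widehat g_i\in H^2(\Gamma_i)$ on each edge comes from Lemma~\ref{lm:g}. For the corners with $\omega_i\in(0,\pi)$, continuity of $\widehat g$ (identity~\eqref{eq:id1}) suffices for a lifting because a genuine corner relaxes the derivative-matching requirement; for the straight vertices $\omega_i=\pi$, where the two adjacent edges form a single smooth boundary arc, one additionally needs continuity of the tangential derivative, which is precisely~\eqref{eq:id2}. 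With these conditions in place, the abstract trace lifting theorem produces $\widehat U\in H^2(\Omega)$ with $\widehat U|_\Gamma = \widehat g$ and a bound $\|\widehat U\|_{2,\Omega}\le C\sum_{i=1}^M\|\widehat g_i\|_{2,\Gamma_i}$. Chaining this with Lemma~\ref{lm:g} immediately yields~\eqref{ub:lifting}.

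An alternative, more self-contained route avoids quoting the full trace characterization: build $\widehat U$ by a local-to-global construction. Near each vertex, flatten the wedge by a bi-Lipschitz (piecewise affine) change of variables, reflect or extend the edge data into the interior to obtain a local $H^2$ function, and then glue the local pieces with a smooth partition of unity subordinate to a cover of $\overline\Omega$ that isolates the vertices. Away from the vertices each edge is smooth and the one-edge $H^2$ extension is routine; the partition of unity contributes only lower-order terms controlled by the same $\sum_i\|\widehat g_i\|_{2,\Gamma_i}$. The compatibility conditions~\eqref{eq:id1}--\eqref{eq:id2} guarantee that the local pieces agree to the order needed so that the glued function has no spurious jump in its first derivatives across the edges meeting at a vertex, i.e., that it genuinely lies in $H^2(\Omega)$ rather than merely $H^2_{\pw}$.

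The main obstacle is the behavior at the corners, and specifically the distinction between $\omega_i<\pi$ and $\omega_i=\pi$. For $H^2$ traces on a polygon the vertex compatibility conditions are subtle: at a reentrant or convex corner the two tangential derivatives of the trace from the two incident edges need not agree, but at an artificial (straight) vertex they must, since the boundary is actually smooth there. Getting this dichotomy right — and confirming that $\widehat g$ satisfies exactly the right condition in each case via~\eqref{eq:id1} and~\eqref{eq:id2} — is the crux; the $L^2$-type norm estimates and the partition-of-unity bookkeeping are then routine. I would accordingly organize the proof so that the corner compatibility is checked first, after which invoking a standard polygonal trace lifting result (e.g. from Grisvard's theory of elliptic problems on polygons) delivers both the existence of $\widehat U\in H^2(\Omega)$ and the desired stability bound.
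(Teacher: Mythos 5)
Your proposal is correct in substance, and it isolates exactly the right structural facts — the dichotomy between genuine corners ($\omega_i\in(0,\pi)$), where the continuity condition~\eqref{eq:id1} alone suffices, and straight vertices ($\omega_i=\pi$), where the tangential-derivative matching~\eqref{eq:id2} is additionally required — followed by the chaining with Lemma~\ref{lm:g}. Where you genuinely differ from the paper is in how the lifting is produced. Your primary route appeals to the polygonal trace characterization (Grisvard); be aware that this theorem characterizes the range of the \emph{pair} $\left(u|_\Gamma,\partial_{\bf n}u|_\Gamma\right)$, so lifting Dirichlet data alone still requires exhibiting compatible Neumann data: at a corner of angle $\omega_i\in(0,\pi)$ this amounts to solving a $2\times2$ linear system for the corner values of the normal derivatives, solvable precisely because the two edge normals are linearly independent when $\omega_i\neq\pi$ — this is the content hidden in your phrase that ``a genuine corner relaxes the derivative-matching requirement,'' and it must also be tracked to confirm that the trace-space norm is controlled by $\sum_i\|\widehat g_i\|_{2,\Gamma_i}$. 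The paper bypasses this machinery entirely: using a partition of unity $\{\phi_i\}$, it lifts the datum at each convex corner by the explicit formula $\widehat U_i=\left(\widehat g|_{\Gamma_i}\circ X_i+\widehat g|_{\Gamma_{i-1}}\circ Y_i-\widehat g(\pts A_i)\right)\phi_i$, where $X_i,Y_i$ are affine maps onto the two incident edges; each composition is $H^2$ in two dimensions, the boundary values come out right thanks to~\eqref{eq:id1}, and the norm bound is immediate. Only at straight vertices does the paper invoke a trace theorem, and there only the standard flat-boundary one ($H^{\nicefrac32}(\Gamma)\to H^2(\Omega)$), since~\eqref{eq:id1}--\eqref{eq:id2} make the localized datum $H^{\nicefrac32}$ — which coincides with the corresponding step in your sketch. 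In short, your route buys brevity by quoting a delicate characterization theorem whose Dirichlet-only consequence still needs an argument, while the paper's explicit construction is elementary, self-contained, and delivers the stability constant directly; your alternative ``local-to-global'' variant is close in spirit to the paper's actual proof, except that no flattening or reflection is needed at convex corners.
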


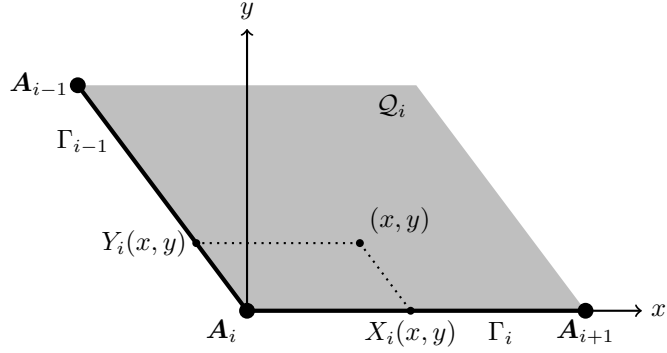
\begin{figure}
\begin{center}
\begin{tikzpicture}[scale=1.5]
\draw[fill=gray!50,draw opacity=0] (0,0) -- (3,0) -- (1.5,2) -- (-1.5,2) -- cycle;
\draw (1.5,2) node [anchor=north east] {$\set Q_i$};
\draw[->,thick] (0,0) -- (3.5,0);
\draw (3.5,0) node [right] {$x$}; 
\draw[->,thick] (0,0) -- (0,2.5);
\draw (0,2.5) node [above] {$y$};
\draw[ultra thick] (0,0) -- (3,0) node[near end,below] {$\Gamma_i$};
\draw[ultra thick] (0,0) -- (-1.5,2) node[near end,left] {$\Gamma_{i-1}$};
\draw (0,0) node [anchor = north east]{$\pts A_i$};
\fill (0,0) circle (2pt);
\draw (-1.5,2) node [anchor = east]{$\pts A_{i-1}$};
\fill (-1.5,2) circle (2pt);
\draw (3,0) node [anchor = north]{$\pts A_{i+1}$};
\fill (3,0) circle (2pt);
\draw (1,0.6) node [anchor = south west] {$(x,y)$};
\fill (1,0.6) circle (1pt);
\draw [thick,dotted] (1,0.6) -- (-0.45,0.6);
\draw [thick,dotted] (1,0.6) -- (1.45,0);
\draw (1.45,0) node [anchor = north] {$X_i(x,y)$};
\draw (-0.45,0.6) node [anchor = east] {$Y_i(x,y)$};
\fill (1.45,0) circle (1pt);
\fill (-0.45,0.6) circle (1pt);
\end{tikzpicture}
\end{center}
\caption{Graphical illustration of (local) trace lifting construction.}
\label{fig:lifting}
\end{figure}

\begin{proof}
We use a partition of unity approach. Specifically, to each corner~$\pts A_i$ of~$\Omega$, we associate a function~$\phi_i\in C^\infty(\overline\Omega)$ such that $\sum_{i=1}^M\phi_i(\pts x)=1$ for any $\pts x\in\Gamma$, and~$\supp(\phi_i)\cap\Gamma\subset\Gamma_{i-1}\cup\{\pts A_i\}\cup\Gamma_i$, for~$1\le i\le M$. 

Fix~$i\in\{1,\ldots, M\}$. If~$0<\omega_i<\pi$, we may assume, without loss of generality, that~$\pts A_i$ coincides with the origin~$(0,0)$, and the edge~$\Gamma_i$ can be placed on the first coordinate axis. Denoting the (Cartesian) coordinates in this system by~$(x,y)$, we let
\[
X_i(x,y)=\left(x-\frac{y}{\tan(\omega_i)},0\right),\qquad
Y_i(x,y)=\left(\frac{y}{\tan(\omega_i)},y\right);
\]
see Figure~\ref{fig:lifting} for a graphical illustration. Observe that
\begin{align*}
X_i|_{\Gamma_{i-1}}&=\pts 0, &X_i|_{\Gamma_i}&=\mathsf{id},\\
Y_i|_{\Gamma_{i-1}}&=\mathsf{id}, &Y_i|_{\Gamma_{i}}&=\pts 0,
\end{align*}
where~$\mathsf{id}$ is the identity function. Then, for $(x,y)\in\Omega$, we define the lifting
\[
\widehat U_i=\begin{cases}
\left(\widehat g|_{\Gamma_{i}}\circ X_i+\widehat g|_{\Gamma_{i-1}}\circ Y_i-\widehat g(\pts A_i)\right)\phi_i&\text{in }\set Q_i\cap\Omega,\\
0&\text{on }\Omega\setminus\set Q_i,
\end{cases}
\]
where
\[
\set Q_i=\left\{\bm x=\omega_1(\pts A_{i+1}-\pts A_i)+\omega_2(\pts A_{i-1}-\pts A_{i}):\,\omega_1,\omega_2\in(0,1)\right\};
\]
cf. the gray area in Figure~\ref{fig:lifting}. The lifting~$\widehat U_i$ satisfies the boundary condition
\begin{equation}\label{eq:traceU}
\widehat U_i|_{\Gamma}=\widehat g\phi_i|_{\Gamma}.
\end{equation} 
Furthermore, we note that
\[
\|\widehat U_i\|_{2,\Omega}\le C\left(\|\widehat g_{i-1}\|_{2,\Gamma_{i-1}}+\|\widehat g_i\|_{2,\Gamma_i}+|\widehat g(\pts A_i)|\right).
\]
Using~\eqref{eq:Sobolev}, we obtain
\begin{equation}\label{eq:H2U}
\|\widehat U_i\|_{2,\Omega}\le C\left(\|\widehat g_{i-1}\|_{2,\Gamma_{i-1}}+\|\widehat g_i\|_{2,\Gamma_i}\right).
\end{equation}
If~$\omega_i=\pi$, then the function~$\widehat g\phi_i|_{\Gamma}$ belongs to~$H^{\nicefrac{3}{2}}(\Gamma)$, and by the trace theorem, there exists~$\widehat U_i\in H^2(\Omega)$ which again satisfies~\eqref{eq:traceU} as well as~\eqref{eq:H2U}. Therefore, letting
\[
\widehat U=\sum_{i=1}^M\widehat U_i,
\]
we see that~$\widehat U|_\Gamma=\widehat g$, and
\[
\|\widehat U\|_{2,\Omega}\le\sum_{i=1}^M\|\widehat U_i\|_{2,\Omega}\le C\sum_{i=1}^M\|\widehat g_i\|_{2,\Gamma_i}.
\]
Employing Lemma~\ref{lm:g} completes the argument.
\end{proof}

\subsection{Weak solution}\label{sc:weak}

Let 
\begin{equation}\label{eq:fhat}
\widehat f:= f-\mu \sum_{i=1}^M\Theta_i\in L^2(\Omega).
\end{equation}
Then, proceeding analogously as in the proof of Lemma~\ref{lm:g}, we deduce that
\begin{equation}\label{eq:fhatbound}
\|\widehat f\|_{0,\Omega}\le \|f\|_{0,\Omega}+\mu\sum_{i=1}^M\|\Theta_i\|_{0,\Omega}
\le \|f\|_{0,\Omega}+C\sum_{i=1}^M\|g_i\|_{2,\Gamma_i},
\end{equation}
with a constant independent of~$f$ and~$g$. Consider the regularized problem
\begin{alignat}{2}
-\Delta \widehat u+\mu \widehat u&=\widehat f&\qquad&\text{in }\Omega,\label{eq:uhat}\\
\widehat u&=\widehat g&&\text{on }\Gamma,\label{eq:bcuhat}
\end{alignat}
where~$\widehat g$ is the boundary function from~\eqref{eq:ghat}.

\begin{proposition}
Let $\Omega$ be a convex and bounded polygonal domain. Then, there exists a unique solution $\widehat u\in H^2(\Omega)$ to ~\eqref{eq:uhat}--\eqref{eq:bcuhat} that satisfies the stability bound
\begin{equation}
\|\widehat{u}\|_{2,\Omega}\leq C\left(\|f\|_{0,\Omega}+\sum_{i=1}^M\| g_i\|_{2,\Gamma_i}\right)\label{ub:uhat},
\end{equation}
with a constant~$C>0$ depending on~$\Omega$, and on~$\mu$.
\end{proposition}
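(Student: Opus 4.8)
The plan is to homogenize the boundary condition and then invoke elliptic regularity on the convex polygon. First I would use the $H^2$ lifting $\widehat U$ from Lemma~\ref{lifting} and write $\widehat u=\widehat U+w$, so that $w$ solves the auxiliary problem with \emph{homogeneous} Dirichlet data. Since $\widehat U\in H^2(\Omega)$, the function $\widetilde f:=\widehat f+\Delta\widehat U-\mu\widehat U$ is a legitimate element of $L^2(\Omega)$, and $w$ satisfies $-\Delta w+\mu w=\widetilde f$ in $\Omega$ together with $w|_\Gamma=0$. Combining~\eqref{eq:fhatbound} and~\eqref{ub:lifting} yields $\|\widetilde f\|_{0,\Omega}\le C(\|f\|_{0,\Omega}+\sum_{i=1}^M\|g_i\|_{2,\Gamma_i})$, with $C$ depending on $\mu$ through $\|\mu\|_{\infty,\Omega}$.

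Next, I would establish existence and uniqueness of $w\in H_0^1(\Omega)$ via the Lax--Milgram lemma applied to the bilinear form $a(w,v)=\int_\Omega\nabla w\cdot\nabla v\dx+\int_\Omega\mu wv\dx$. Boundedness is immediate, and coercivity on $H_0^1(\Omega)$ follows from the Poincaré inequality together with $\mu\ge0$ (the reaction term only helps). Since $v\mapsto\int_\Omega\widetilde f v\dx$ is a bounded functional on $H_0^1(\Omega)$, this produces a unique weak solution satisfying the $H^1$ stability estimate $\|w\|_{1,\Omega}\le C\|\widetilde f\|_{0,\Omega}$.

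The main step---and the one place where convexity of $\Omega$ is essential---is to upgrade $w$ to $H^2(\Omega)$. I would rewrite the equation as $-\Delta w=\widetilde f-\mu w$, whose right-hand side lies in $L^2(\Omega)$ by the previous step and $\mu\in L^\infty(\Omega)$. Because $\Omega$ is a convex polygon, so that no reentrant corners occur, the classical $H^2$-regularity result for the Dirichlet Laplacian on convex polygonal domains (Grisvard, Kadlec) applies and gives $w\in H^2(\Omega)$ with $\|w\|_{2,\Omega}\le C\|\Delta w\|_{0,\Omega}$. Using $\|\mu w\|_{0,\Omega}\le\|\mu\|_{\infty,\Omega}\|w\|_{0,\Omega}$ and the $H^1$ bound then yields $\|w\|_{2,\Omega}\le C\|\widetilde f\|_{0,\Omega}$. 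I expect this invocation of the convex-domain regularity theory to be the only genuine difficulty; it is precisely what fails for non-convex polygons and justifies the hypothesis.

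Finally, I would assemble $\widehat u=\widehat U+w\in H^2(\Omega)$ as a solution of~\eqref{eq:uhat}--\eqref{eq:bcuhat}, and apply the triangle inequality together with~\eqref{ub:lifting} and the estimate on $\|\widetilde f\|_{0,\Omega}$ to obtain the stability bound~\eqref{ub:uhat}. Uniqueness follows since the difference of two solutions would solve the homogeneous problem with vanishing data and hence be zero by coercivity; the remaining arguments beyond the regularity step are standard Lax--Milgram theory and bookkeeping with the triangle inequality.
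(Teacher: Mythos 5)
Your proposal follows essentially the same route as the paper: the same decomposition $\widehat u=\widehat U+w$ using the lifting from Lemma~\ref{lifting} (the paper calls the remainder $\widehat\rho$), the same right-hand side $\widehat f+\Delta\widehat U-\mu\widehat U\in L^2(\Omega)$, and the same appeal to $H^2$-regularity of the homogeneous Dirichlet problem on convex polygons, followed by the identical bookkeeping with~\eqref{ub:lifting} and~\eqref{eq:fhatbound}. The only difference is that you spell out the Lax--Milgram existence step and the $L^2$-regularity bootstrap explicitly, whereas the paper subsumes both under a citation to elliptic regularity theory; your argument is correct.
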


\begin{proof}
Proposition \ref{lifting} provides the existence of a function $\widehat{U}\in H^2(\Omega)$ with 
$\widehat{U}\big|_{\Gamma}=\widehat{g}$. Since $\widehat{f}+\Delta\widehat{U}-\mu\widehat{U}$ belongs to $L^2(\Omega)$, elliptic regularity theory in convex polygons (see, e.g., \cite{Gr85,BG88,Da88}) implies the existence of a unique remainder function $\widehat\rho\in H^2(\Omega)$ with
\begin{alignat*}{2}
-\Delta \widehat\rho+\mu \widehat\rho&=\widehat{f}+\Delta\widehat{U}-\mu\widehat{U}&\qquad&\text{in }\Omega,\\
\widehat\rho&=0&&\text{on }\Gamma,
\end{alignat*}
and
\begin{equation}
\|\widehat\rho\|_{2,\Omega}\leq C\|\widehat{f}+\Delta\widehat{U}-\mu\widehat{U} \|_{0,\Omega}
\le C\left(\|\widehat{U}\|_{2,\Omega}+\|\widehat{f}\|_{0,\Omega}\right)
\label{ub:u}.
\end{equation}
Thus, the function $\widehat{u}:=\widehat{U}+\widehat\rho$ belongs to~$H^2(\Omega)$. Furthermore, it holds that
\[
-\Delta \widehat{u}+\mu \widehat{u}=-\Delta\widehat{U}+\mu\widehat{U}-\Delta\widehat\rho+\mu\widehat\rho=\widehat{f} \qquad\text{in }\Omega,
\]
as well as 
\[
\widehat{u}\big|_{\Gamma}=\widehat{U}\big|_{\Gamma}+\widehat\rho\big|_{\Gamma}=\widehat{g}.
\] 
In addition, combining~\eqref{ub:lifting} and~\eqref{ub:u} yields 
\begin{align*}
\|\widehat{u}\|_{2,\Omega}
&\le\|\widehat{U}\|_{2,\Omega}+\|\widehat\rho\|_{2,\Omega}
\le C\left(\|\widehat{U}\|_{2,\Omega}+\|\widehat{f}\|_{0,\Omega}\right)\\
&\le C\left(\sum_{i=1}^M\| g_i\|_{2,\Gamma_i}+\|\widehat{f}\|_{0,\Omega}\right),
\end{align*}
which, by virtue of~\eqref{eq:fhatbound}, results in the bound~\eqref{ub:uhat}.
\end{proof}

\begin{definition}
We call the function $u$ defined by
\begin{equation}\label{def:u}
u:=\widehat u+\sum_{i=1}^M\Theta_i,
\end{equation}
with $\widehat u$ the unique $H^2$-solution of~\eqref{eq:uhat}--\eqref{eq:bcuhat}, the \emph{weak solution} of~\eqref{eq:1a}--\eqref{eq:1b}.
\end{definition}


\begin{remark}
It can be verified easily that the weak solution defined in~\eqref{def:u} belongs to a class of weighted Sobolev spaces; cf., e.g., \cite{BG88,BG89}. The norms of these spaces contain local radial weights at the discontinuity points~$\set A$ of the Dirichlet boundary data, and, thereby, account for possible singularities in the solution of~\eqref{eq:1a}--\eqref{eq:1b}. Based on an inf-sup theory, the work~\cite{HoustonWihler:12} shows that~\eqref{eq:1a}--\eqref{eq:1b} exhibits a unique solution within this framework. 
\end{remark}


\section{Numerical approximation}\label{sc:numerics}

The purpose of this section is to discretize~\eqref{eq:1a}--\eqref{eq:1b} by a finite element approach. Specifically, we will employ a Nitsche method to obtain a numerical approximation of the elliptic problem~\eqref{eq:uhat}, with the possibly non-homogeneous Dirichlet boundary condition~\eqref{eq:bcuhat}. The discrete solution will then be defined similarly as in~\eqref{def:u}.

\subsection{Meshes and spaces}

We consider regular, quasi-uniform meshes ${\mathcal T}_h$ of mesh size~$h>0$, which partition~$\Omega\subset\mathbb{R}^2$ into open disjoint triangles
and/or parallelograms $\{K\}_{K\in{\mathcal T}_h}$, i.e.,
$\overline\Omega=\bigcup_{K\in\T}\overline K$. Each element
$K\in{\mathcal T}_h$ is an affinely mapped image of the reference
triangle~$\wh T=\{(\wh x,\wh y):\,-1<\wh x<1,-1<\wh y<-\wh x\}$ or the
reference square $\widehat{S}=(-1,1)^2$, respectively. Moreover, we define the conforming finite element space
\[
\V=\{v\in H^1(\Omega) : v|_K \in{\mathbb{S}}(K), K\in {\T}\},
\]
where, for~$K\in\T$, we write $\mathbb{S}(K)$ to mean either
the space~$\mathbb{P}_{1}(K)$ of all polynomials of total degree at
most~$1$ on~$K$ or the space~$\mathbb{Q}_{1}(K)$ of all
polynomials of degree at most~$1$ in each coordinate direction on
$K$.

\subsection{Nitsche discretization}\label{sc:Nitsche}

The classical Nitsche approach~\cite{N71} for the numerical approximation of~\eqref{eq:uhat}--\eqref{eq:bcuhat} is given by finding~$\widehat u_h\in\V$ such that
\begin{equation}\label{eq:Nitsche}
a_h(\widehat u_h,v)=l_h(v) \quad \text{for all }v\in \V.
\end{equation}
Here, denoting by~$\nabla_h$ the elementwise gradient operator, we define the bilinear form
\begin{align*}
a_h(w,v)&=\int_{\Omega}\left\{\nabla_h w \cdot\nabla_h v+\mu wv\right\}\dx\\
&\quad							-\int_{\partial \Omega}v\left(\nabla_h w\cdot {\bf n}\right)\ds
							-\int_{\partial \Omega}w\left(\nabla_h v \cdot {\bf n}\right)\ds
							+\frac{\gamma}{h}\int_{\partial \Omega}w v\ds,
\end{align*}
as well as the linear functional
\[
l_h(v)=\int_{\Omega}\widehat f v\dx
				-\int_{\partial \Omega}\widehat g\left(\nabla_h v\cdot {\bf n}\right)\ds
				+\frac{\gamma}{h}\int_{\partial\Omega}\widehat g v\ds,
\]
with~$\widehat g$ and~$\widehat f$ from~\eqref{eq:ghat} and~\eqref{eq:fhat}, respectively.
The penalty parameter~$\gamma>0$ appearing in both forms is chosen sufficiently large (but independent of the mesh size) as to guarantee the well-posedness of the weak formulation~\eqref{eq:Nitsche}; this can be shown in a similar way as in the context of discontinuous Galerkin methods; see, e.g., ~\cite{ABCM}. In addition, referring to~\cite[Satz~2]{N71}, cf.~also~\cite[Section~5.1]{ABCM}, there holds the \emph{a priori} error estimate
\begin{equation}\label{eq:errNitsche}
\|\widehat u-\widehat u_h\|_{0,\Omega}\le Ch^2|\widehat u|_{2,\Omega},
\end{equation}
with a constant~$C=C(\mu,\widehat f,\widehat g)>0$ independent of the mesh size~$h$.

\begin{definition}
Analogously to \eqref{def:u}, we define the discrete solution of~\eqref{eq:1a}--\eqref{eq:1b} by
\begin{equation}\label{def:uh}
u_h:=\widehat u_h+\sum_{i=1}^M\Theta_i,
\end{equation}
where~$\widehat u_h\in\V$ is the Nitsche solution from~\eqref{eq:Nitsche}, and~$\{\Theta_i\}_{i=1}^M$ are the singular functions from~\eqref{eq:Theta}.
\end{definition}

\begin{theorem}\label{ee:apriori}
Let $u$ be the solution of~\eqref{eq:1a}--\eqref{eq:1b} given by~\eqref{def:u}, and $u_h$ its discrete counterpart from~\eqref{def:uh}. Then, there holds the \emph{a priori} error estimate
\begin{equation}\label{eq:apriori}
\|u-u_h\|_{0,\Omega}\le Ch^2,
\end{equation}
with a constant~$C=C(\mu,f,g)>0$ independent of~$h$.
\end{theorem}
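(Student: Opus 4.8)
The plan is to reduce the error estimate for the original problem directly to the Nitsche estimate~\eqref{eq:errNitsche} for the regularized problem, exploiting the fact that the singular parts in the definitions~\eqref{def:u} and~\eqref{def:uh} are \emph{identical} (they are the exact functions~$\Theta_i$ in both cases). Subtracting~\eqref{def:uh} from~\eqref{def:u}, the singular contributions cancel exactly, leaving
\[
u-u_h=\left(\widehat u+\sum_{i=1}^M\Theta_i\right)-\left(\widehat u_h+\sum_{i=1}^M\Theta_i\right)=\widehat u-\widehat u_h.
\]
Taking the~$L^2(\Omega)$-norm therefore gives~$\|u-u_h\|_{0,\Omega}=\|\widehat u-\widehat u_h\|_{0,\Omega}$ with no approximation error incurred by the singular part. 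This is the conceptual heart of the method: the discontinuous, low-regularity features are captured exactly, so only the smooth remainder needs to be discretized.

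Next I would invoke the \emph{a priori} estimate~\eqref{eq:errNitsche}, which yields
\[
\|u-u_h\|_{0,\Omega}=\|\widehat u-\widehat u_h\|_{0,\Omega}\le Ch^2|\widehat u|_{2,\Omega}.
\]
Since~$\widehat u\in H^2(\Omega)$ by the Proposition, the seminorm~$|\widehat u|_{2,\Omega}$ is finite, so the right-hand side is of order~$h^2$. To see that the constant depends only on the data~$\mu$, $f$, and~$g$ (and on~$\Omega$), and not on~$h$, I would bound~$|\widehat u|_{2,\Omega}\le\|\widehat u\|_{2,\Omega}$ and apply the stability estimate~\eqref{ub:uhat}, giving
\[
|\widehat u|_{2,\Omega}\le\|\widehat u\|_{2,\Omega}\le C\left(\|f\|_{0,\Omega}+\sum_{i=1}^M\|g_i\|_{2,\Gamma_i}\right).
\]
Combining the two displays produces~\eqref{eq:apriori} with a constant of the asserted form.

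I expect the proof to be genuinely short, since all the analytic work has been front-loaded into the decomposition and the regularity/stability results. The only subtlety worth stating explicitly is the exact cancellation of the singular parts; one should note that this requires the \emph{same} singular functions~$\Theta_i$ to appear in both~\eqref{def:u} and~\eqref{def:uh}, which is guaranteed by construction since they are computed directly from the boundary data~$g$ via~\eqref{eq:Theta} and are never discretized. No mesh-dependence enters through the~$\Theta_i$, and the membership~$\widehat u\in H^2(\Omega)$ is precisely what makes the Nitsche bound~\eqref{eq:errNitsche} applicable; without the trace-lifting construction of Section~\ref{sc:trace} guaranteeing~$\widehat g$ admits an~$H^2$ lifting, this regularity—and hence the second-order rate—would fail. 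There is no real obstacle here; the difficulty of the paper lies entirely in establishing the preceding structural results, not in this final assembly.
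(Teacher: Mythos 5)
Your proposal is correct and follows exactly the paper's own proof: the singular parts cancel so that $u-u_h=\widehat u-\widehat u_h$, the Nitsche estimate~\eqref{eq:errNitsche} is applied, and the stability bound~\eqref{ub:uhat} controls $|\widehat u|_{2,\Omega}$ in terms of the data $f$ and $g$. Your additional remarks on why the cancellation is exact and why $\widehat u\in H^2(\Omega)$ is essential are accurate elaborations of the same argument, not a different route.
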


\begin{proof}
We recall the definitions~\eqref{def:u} and~\eqref{def:uh} in order to notice
\[
u-u_h=\widehat u+\sum_{i=1}^M\Theta_i-\left(\widehat u_h+\sum_{i=1}^M\Theta_i\right)
=\widehat u-\widehat u_h.
\]
Therefore, applying~\eqref{eq:errNitsche} yields
\[
\|u-u_h\|_{0,\Omega}\le Ch^2|\widehat u|_{2,\Omega}.
\]
Finally, recalling~\eqref{ub:uhat} completes the proof.
\end{proof}

\subsection{Numerical example}\label{sc:num}
On the rectangle $\Omega=(-1,1)\times(0,1)$ we consider the elliptic boundary value problem
\begin{alignat*}{2}
-\Delta u+u&=e^{-r^2}(5-4r^2)\theta &\qquad&\text{in }\Omega\\
u&=g&&\text{on }\Gamma,
\end{alignat*}
with the Dirichlet boundary data~$g$ chosen such that the analytical solution is given by
\[
u(r,\theta)=e^{-r^2}\theta.
\]
Here, $(r,\theta)$ denote polar coordinates in $\mathbb{R}^2$. Note that the solution~$u$ is smooth along $\Gamma$ except at the origin, where it exhibits a discontinuity jump. In particular, it follows that~$u\not\in H^1(\Omega)$.

Starting from a regular coarse mesh, we investigate the practical performance of the a priori error estimate derived in Theorem \ref{ee:apriori} within a sequence of uniformly refined $\mathbb{P}_1$ elements. In Figure \ref{fig:L2errorplot} we present a comparison of the $L^2$ norm of the error versus the mesh size $h$ on a log-log scale for each of the meshes. Our results are in line with the \emph{a priori} error estimate~\eqref{eq:apriori}, and show that the discrete solution~$u_h$ from~\eqref{def:uh} converges of second order with respect to the mesh size~$h$. Moreover, in Figure \ref{fig:plot} we show the Nitsche solution $\widehat u_h\in\V$ defined in~\eqref{eq:Nitsche}, as well as the computed solution $u_h$ for a mesh consisting of 1024 elements.

\begin{figure}
\centering
\includegraphics[width=0.6\textwidth]{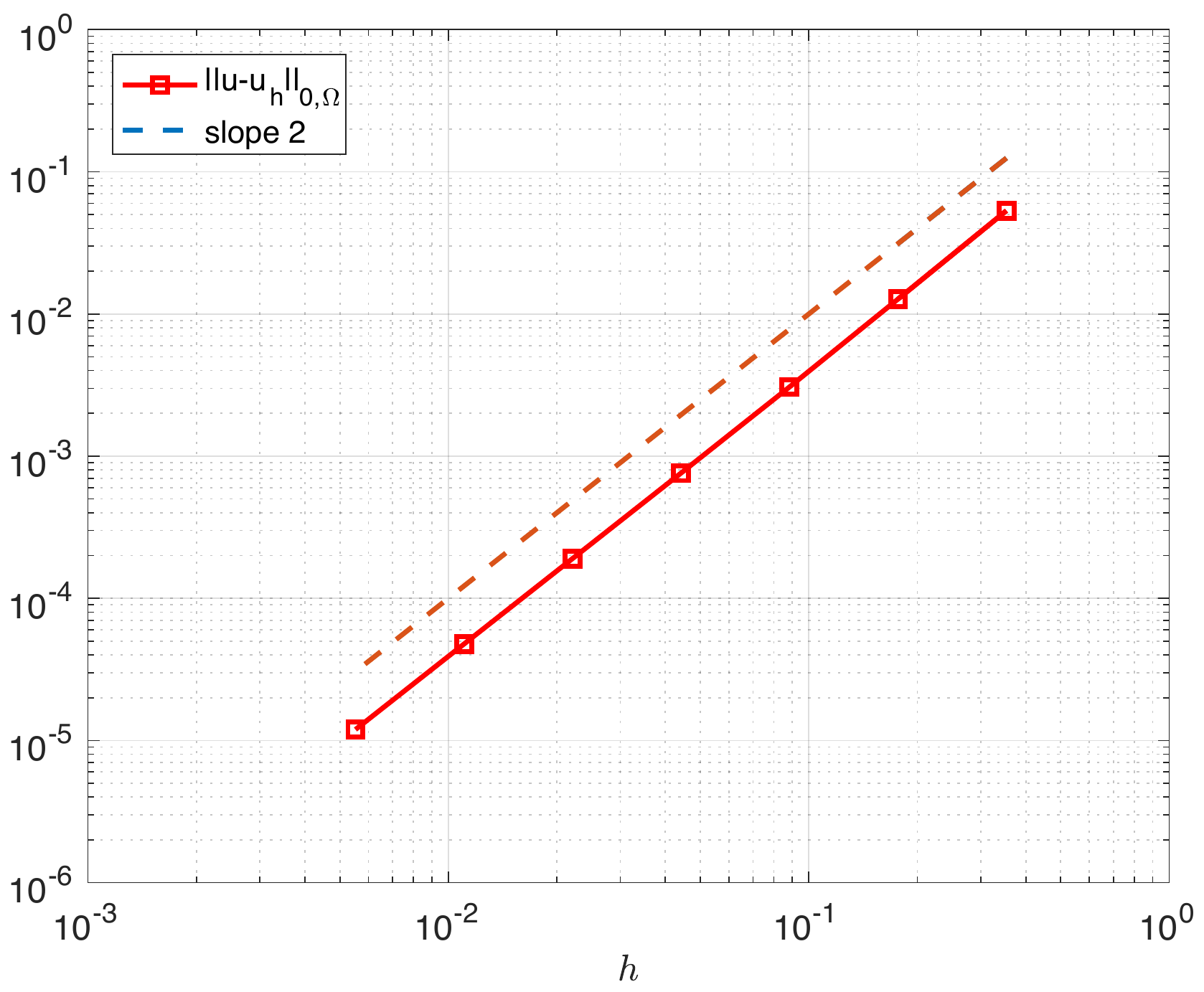}
\caption{$L^2$ error $\|u-u_h\|_{0,\Omega}$ against mesh size $h$ compared to a reference line with slope~$2$ (expected behaviour).}
\label{fig:L2errorplot}
\end{figure}

\begin{figure} 
	\centering
	\includegraphics[width=0.45\linewidth]{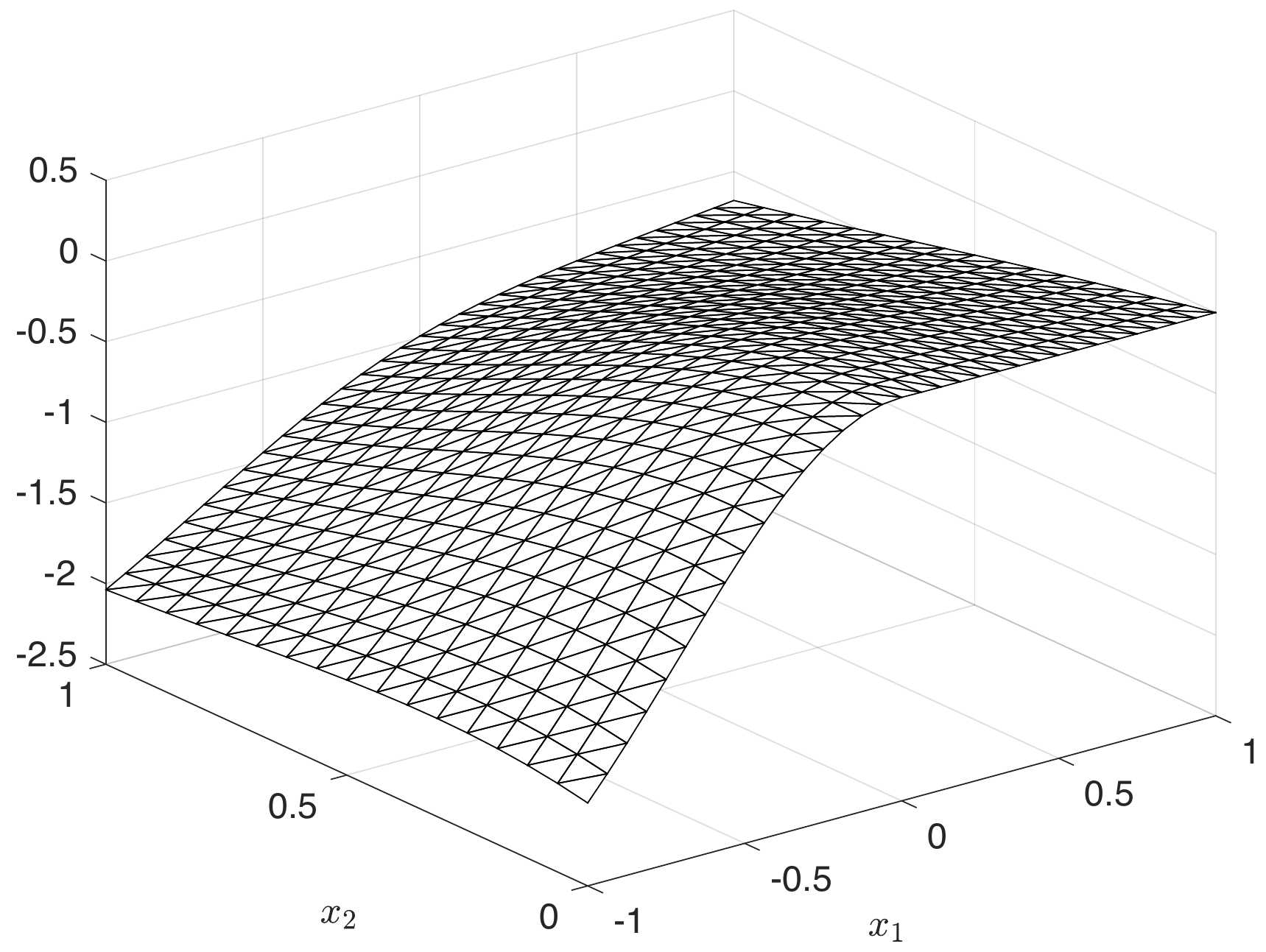}\quad
	\includegraphics[width=0.45\linewidth]{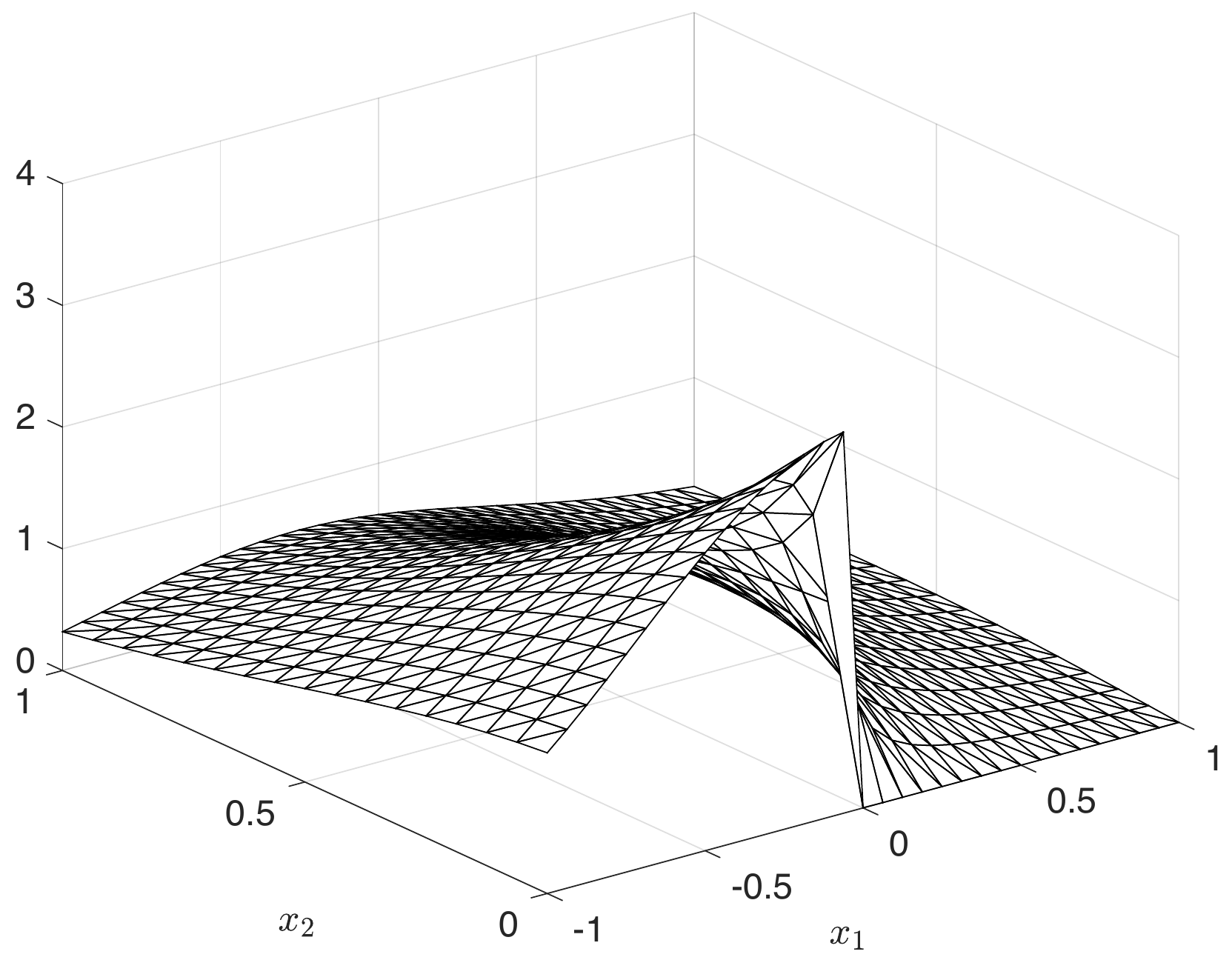}
	\caption{Nitsche solution (left) and discrete solution (right) 	based on a uniform mesh with $1024$ elements.} 
	\label{fig:plot}		
\end{figure}

\bibliographystyle{amsplain}
\bibliography{literature}

\end{document}